\newtheorem{theorem}{Theorem}[section]
\newtheorem*{theorem*}{Theorem}
\newtheorem{lemma}[theorem]{Lemma}
\newtheorem*{lemma*}{Lemma}
\newtheorem{corollary}[theorem]{Corollary}
\newtheorem{observation}[theorem]{Observation}
\theoremstyle{definition}
\newtheorem{remark}[theorem]{Remark}
\newtheorem*{remark*}{Remark}
\newcommand{\Symk}{S_{\hspace{-.3ex}k}}
\newcommand{\R}{\mathbb{R}}
\newcommand{\Sf}{\varphi}
\newcommand{\psf}{\eta}
\newcommand{\brak}[2]{\bigl[\!\begin{smallmatrix} #1 \\ #2 \end{smallmatrix}\!\bigr]}
\DeclareMathOperator{\conv}{conv}
\title{Colorful Intersections and Tverberg Partitions}
\author{Michael Gene Dobbins\hspace{0.03cm}$^\ast$}
\thanks{$^\ast$ Department of Mathematics and Statistics, Binghamton University, Binghamton, New York, USA. Support from KAIST Advanced Institute for Science-X (KAI-X)}
\author{Andreas F. Holmsen$\hspace{0.03cm}^\dagger$}
\author{Dohyeon Lee$\hspace{0.03cm}^\dagger$}
\thanks{$^\dagger$ Department of Mathematical Sciences, KAIST, Daejeon, South Korea  \and 
Discrete Mathematics Group, Institute for Basic Science (IBS), Daejeon, South Korea. Supported by the Institute for Basic Science (IBS-R029-C1)}
\thanks{Email: mdobbins@binghamton.edu,  andreash@kaist.edu,  mathloveguy@kaist.ac.kr}
\keywords{Tverberg's theorem, geometric transversals, topological combinatorics, configuration space/test map, discrete Morse theory}
\begin{document}

\begin{abstract} The colorful Helly theorem and Tverberg's theorem are fundamental results in discrete geometry. We prove a theorem which interpolates between the two. In particular, we show the following for any integers $d \geq m \geq 1$ and $k$ a prime power. Suppose $F_1, F_2, \dots, F_m$ are families of convex sets in $\mathbb{R}^d$, each of size $n > (\frac{d}{m}+1)(k-1)$, such that for any choice $C_i\in F_i$ we have $\bigcap_{i=1}^mC_i\neq \emptyset$. Then, one of the families $F_i$ admits a Tverberg $k$-partition. That is, one of the $F_i$ can be partitioned into $k$ nonempty parts such that the convex hulls of the parts have nonempty intersection. As a corollary, we also obtain a result concerning $r$-dimensional transversals to families of convex sets in $\R^d$ that satisfy the colorful Helly hypothesis, which extends the work of Karasev and Montejano.
\end{abstract}

\maketitle

\section{Introduction}

\subsection{Background} A {\em $k$-partition} of a finite set $X$ is an {\em ordered} partition of $X$ into $k$-nonempty parts, that is, a $k$-tuple
\[(X_1, X_2, \dots, X_k), \]
such that $X = X_1\cup X_2 \cup \cdots \cup X_k$ and $X_i\neq \emptyset$ for all $i$. If the elements of $X$ are points (or convex sets) in $\mathbb{R}^d$, a $k$-partition of $X$, $(X_1, X_2, \dots, X_k)$, is called a {\em Tverberg $k$-partition} provided that 
\[(\conv\: X_1) \cap (\conv\:X_2) \cap \cdots \cap (\conv\:X_k) \neq \emptyset.\]

A fundamental result of discrete geometry is the celebrated theorem of Tverberg \cite{tverberg-66}, which  asserts that if $|X|>(d+1)(k-1)$, then $X$ admits a Tverberg $k$-partition.

\medskip

Let $F_1, F_2, \dots, F_{m}$ be families of sets. We say that 
$F_1, F_2, \dots, F_{m}$ satisfy {\em the colorful intersection property} if 
\[C_1\cap C_2 \cap \cdots \cap C_{m} \neq \emptyset\] for every choice $C_1\in F_1, C_2\in F_2, \dots, C_{m}\in F_{m}$. 

Another fundamental result of discrete geometry is the colorful Helly theorem \cite{barany-82}.
It asserts that if $F_1, F_2, \dots, F_{d+1}$ are finite families of convex sets in $\R^d$ that satisfy the colorful intersection property, then there is a point in common to every member of one of the families. Observe that it is no loss in generality to assume that $|F_i|=n$ for all $i$, in which case the conclusion asserts (in our terminology) that one of the $F_i$ admits a Tverberg $n$-partition. 

\medskip

Tverberg's theorem and the colorful Helly theorem have both played important roles in the development of discrete geometry, and there are a number of generalizations and extensions. For further information on the subject, we suggest the reader consult \cite{barany-21, BK-21, lgmm2019, Eckhoff1993, holmsen2017helly, mato-dg-2002} and the references therein.

\medskip

In the last decade, a particular intriguing question related to the colorful Helly theorem has been under investigation: {\em What conclusions (if any) can be drawn if we are given fewer than $d+1$ families of convex sets in $\R^d$ which satisfy the colorful intersection property?} See \cite{km2011, mrr2020, mont2013} for some answers to this question, as well as \cite[Conjecture 1]{mrr2020} and \cite[Problems 8.1 and 8.2]{BK-21}.

\subsection{Main results} Our main result can be viewed as an interpolation between the colorful Helly theorem and Tverberg's theorem.

\begin{theorem}\label{thm:tverberg}
Given integers $d\geq m \geq 1$ and $k$ a prime power. Suppose 
$F_1, F_2, \dots, F_m$ are families of convex sets in $\R^d$, with $|F_i| = n > (\frac{d}{m}+1)(k-1)$, that satisfy the colorful intersection property. Then one of the $F_i$ admits a Tverberg $k$-partition.
\end{theorem}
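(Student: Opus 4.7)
My plan is a configuration space / test map argument in the spirit of the topological proofs of Tverberg's theorem. Assume for contradiction that no $F_i$ admits a Tverberg $k$-partition. Using the colorful intersection hypothesis, for each tuple $(l_1,\dots,l_m)\in [n]^m$ pick a point $p(l_1,\dots,l_m)\in C_{l_1}^{(1)}\cap \cdots \cap C_{l_m}^{(m)}$ and extend multilinearly to $\bar p\colon \prod_{i=1}^m \Delta_{n-1}\to \R^d$. The key property, immediate from convexity of each $C_l^{(i)}$, is that for every $i\in [m]$ and every $(y^{(1)},\dots,y^{(m)})\in \prod_i \Delta_{n-1}$, the value $\bar p(y)$ lies in $\conv\bigl(\bigcup_{l\in \mathrm{supp}(y^{(i)})} C_l^{(i)}\bigr)$. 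This is the bridge from the colorful hypothesis to Tverberg-type configurations: a Tverberg $k$-partition of $F_i$ will correspond to finding inputs $y_j^{(i)}$ with disjoint supports whose $\bar p$-images agree.

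\textbf{Configuration space and test map.} For each $i$, let $X_i = (\Delta_{n-1})^{*k}_\Delta$, the $k$-fold $2$-wise deleted join, which parameterizes weighted $k$-partitions of $F_i$; it is homeomorphic to $[k]^{*n}$ and hence $(n-2)$-connected. Take the $m$-fold join $X := X_1 * \cdots * X_m$, so a generic point reads $\sum_i s_i \xi_i$ with $\xi_i\in X_i$, $s_i\geq 0$, $\sum s_i = 1$. The group $G = (\mathbb{Z}/p)^r$ (with $k = p^r$) acts on each $X_i$ by permuting the $k$ parts via the regular embedding $G\hookrightarrow S_k$, and diagonally on $X$. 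The join formula gives connectivity at least $mn - 2$, so the $G$-index of $X$ is at least $mn - 1$. Writing $\xi_i = \sum_j t_j^{(i)} y_j^{(i)}$ with marginal $\bar\xi_i = \sum_j t_j^{(i)} y_j^{(i)}\in \Delta_{n-1}$, for each $j\in [k]$ define
\[
Z_j := \sum_{i=1}^m s_i t_j^{(i)}\, \bar p\bigl(\bar\xi_1,\ldots, y_j^{(i)},\ldots, \bar\xi_m\bigr)\in \R^d, \qquad U_j := \bigl(s_i t_j^{(i)}\bigr)_{i=1}^m \in \R^m.
\]
Projecting $(Z_j, U_j)_{j=1}^k$ onto its sum-zero subspace yields a $G$-equivariant map $f\colon X \to V \otimes \R^{d+m}$, where $V$ is the $(k-1)$-dimensional reduced permutation representation. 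Since the regular $G$-action on $[k]$ is transitive, $V^G = 0$ and hence the target $W$ satisfies $W^G = 0$. The hypothesis $n > (\tfrac{d}{m}+1)(k-1)$ is precisely $mn > (d+m)(k-1) = \dim W$, so a Volovikov-type equivariant obstruction argument (comparing the $G$-index of $X$ with that of $S(W)$) will force $f$ to have a zero.

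\textbf{Extraction and main obstacle.} At a zero of $f$, vanishing of the $U_j$-component forces $t_j^{(i)} = 1/k$ whenever $s_i > 0$. If the zero is a single-family ``corner'' of the join ($s_{i^*}=1$ for some $i^*$), vanishing of the $Z_j$-component together with the key property of $\bar p$ immediately produces a Tverberg $k$-partition of $F_{i^*}$, contradicting the assumption. The principal obstacle is ruling out ``mixed'' zeros in the interior of the join, where several $s_i$ are positive but no individual family need yield a Tverberg partition. Likely remedies are to augment $f$ with additional equivariant coordinates that detect and penalize mixed support, or to replace $X$ by a finer $G$-subcomplex whose high $G$-connectivity is established via discrete Morse theory (as suggested by the abstract keywords), guaranteeing that every zero of the refined test map corresponds to a bona fide single-family Tverberg partition. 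A secondary technical point is the sharpness of the $G$-index / connectivity estimate for the resulting configuration space.
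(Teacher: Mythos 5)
Your plan correctly identifies the broad framework (configuration space / test map, join over the $m$ families, Volovikov, discrete Morse theory for connectivity), but it has a genuine gap that you yourself flag, and the paper's actual proof resolves it by running the logic in the opposite direction rather than by patching your test map.

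The gap is the ``mixed zeros'' problem. Your test map is built \emph{from} the colorful intersection data: you pick a point in each colorful intersection, extend multilinearly to $\bar p$, and look for a zero of $f$. A zero at a corner of the join indeed yields a Tverberg $k$-partition of a single family, but a zero in the interior of the join blends data from several families and gives you a common point of convex hulls of \emph{mixtures} of the $F_i$'s, not a Tverberg partition of any one $F_i$. Neither of your suggested remedies is worked out, and it is not clear either would succeed: adding coordinates to penalize mixed support typically raises $\dim W$ and destroys the dimension count, while the ``finer $G$-subcomplex'' would have to be specified and its connectivity re-established.

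The paper avoids this entirely by a dual construction. Assume no $F_i$ has a Tverberg $k$-partition. By Sarkaria's criterion, for each surjection $\varphi\colon[n]\to[k]$ and each family $F_i$, this failure yields a separating hyperplane, i.e.\ a vector $a_\varphi^{(i)}$ in a representation space $Y$. These vectors are the test map data: one affinely extends $\varphi\mapsto a_\varphi^{(i)}$ over a complex $K_{n,k}$ of ``compatible'' surjections (its vertices are surjections, its faces are collections of surjections whose fiber-intersections are all nonempty), and joins these $m$ maps to get $f\colon K_{n,k}^{*m}\to Y$. The colorful intersection property is then used not to build $f$ but to \emph{restrict its image}: on each facet of the join one can choose colorful intersection points $x_1,\dots,x_k$ witnessing that $f$ avoids a linear subspace $B\subset Y$, so $f$ lands in $Y\setminus B\simeq S^{d(k-1)-1}$. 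Volovikov's theorem then rules out such an equivariant map once $K_{n,k}^{*m}$ is shown to be $(d(k-1)-1)$-connected, which follows from the discrete-Morse-theoretic bound $\mathrm{conn}(K_{n,k})\geq n-k-1$. In short, the paper uses the negation of the conclusion to produce separating hyperplanes and derives a nonexistent nowhere-degenerate equivariant map, whereas you try to use the hypothesis to produce a map and then to force a zero; only the former cleanly handles the interior of the join. Your dimension count $mn > (k-1)(d+m)$ is the right inequality and matches the paper's, so the bookkeeping is fine; the missing piece is conceptual, not arithmetic.
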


\begin{remark}\label{rem:projection}
Note that for $m=d$, our theorem follows immediately from the colorful Helly theorem. First project the sets into a hyperplane and apply the colorful Helly theorem. There is a point in common to all the members of one of the projected families, and the preimage of this point is a line that intersects all the members of one of the $F_i$. Now, just apply Tverberg's theorem within this line. 

For smaller $m$ this argument becomes generally less effective. Projecting into an $(m-1)$-flat and applying the colorful Helly theorem would give a $(d-m+1)$-flat transversal to one of the families $F_i$, but to obtain a Tverberg $k$-partition within this $(d-m+1)$-flat would require $|F_i| > (d-m+2)(k-1)$, which is significantly worse than the size of $F_i$ that we require in Theorem \ref{thm:tverberg}. 
\end{remark}

\begin{remark} \label{rem:construction}
Whenever $d $ is a multiple of $m$, our theorem is optimal with respect to the size of the families. Indeed, consider the following construction in $\R^d = \R^t \times \R^t \times \cdots \times \R^t$  with $m$ copies of $\R^t$, where $t=\frac{d}{m}$. Let $X\subset \mathbb{R}^t$ be a set of $(t+1)(k-1)$ points which does not admit a Tverberg $k$-partition, and for $1\leq i \leq m$ define the family
\[F_i := \big\{ \R^t \times \cdots \times \R^t \times \underset{i\text{th factor}}{\{x\}} \times \R^t \times \cdots \times \R^t : x \in X \big\}.\] 
Observe that these families satisfy the colorful intersection property, but none of them has a Tverberg $k$-partition.
\end{remark}

\begin{remark}\label{rem:conjectures}
The prime power assumption in our Theorem \ref{thm:tverberg} appears to be just an artifact of our proof method, and we conjecture that the theorem also holds when $k$ is any positive integer.

Another interesting instance that our proof method falls short of would be to generalize Theorem \ref{thm:tverberg} to the case where the $F_i$ may have distinct sizes and we ask for one of the $F_i$ to admit a Tverberg $k_i$-partition. 
We leave it to the reader's imagination to formulate (and prove) such a conjecture.
\end{remark}

\subsection{An application to geometric transversals} 
Some of the earliest results dealing with the colorful intersection property for less than $d+1$ families of convex sets in $\R^d$ are the transversal theorems of Karasev and Montejano \cite{km2011, mont2013}. The simplest case asserts: {\em Given three red convex and three blue convex sets in $\R^3$, where each red set intersects each blue set, then there is a line that intersects every red set or a line that intersects every blue set.} More generally, we have the following

\begin{corollary} \label{cor:transversal}
Let $F_1, F_2, \dots, F_m$ be families of convex sets in $\mathbb{R}^d$, each of size $k+r$,  satisfying the colorful intersection property, where $k$ is a prime power. If $d< \frac{(r+1)m}{k-1}$, then one of the $F_i$ have an $r$-dimensional affine flat transversal. 
\end{corollary}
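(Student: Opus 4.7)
The plan is to deduce this directly from Theorem~\ref{thm:tverberg}. First I would do the algebraic bookkeeping:
\[
d<\frac{(r+1)m}{k-1}\iff\frac{d}{m}(k-1)<r+1\iff\Bigl(\frac{d}{m}+1\Bigr)(k-1)<r+k,
\]
so the size hypothesis $|F_i|=k+r$ is exactly the threshold $n>(\frac{d}{m}+1)(k-1)$ demanded by Theorem~\ref{thm:tverberg}. Applying that theorem, some $F_i$ admits a Tverberg $k$-partition $(X_1,\dots,X_k)$ with a common point
\[
p\in\bigcap_{j=1}^{k}\conv\Bigl(\bigcup_{C\in X_j}C\Bigr).
\]

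For each $j$, I would then use the standard description of the convex hull of a finite union of convex sets, namely
\[
\conv\Bigl(\bigcup_{C\in X_j}C\Bigr)=\Bigl\{\textstyle\sum_{C\in X_j}\lambda_C\,x_C:x_C\in C,\ \lambda_C\ge 0,\ \sum_C\lambda_C=1\Bigr\},
\]
to select, for every $C\in X_j$, a point $x_C\in C$ such that $p\in\conv\{x_C:C\in X_j\}$. Writing $n_j:=|X_j|$, the affine flat $A_j:=\operatorname{aff}\{x_C:C\in X_j\}$ contains $p$, has dimension at most $n_j-1$, and meets every $C\in X_j$.

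The core of the argument is then a one-line dimension count. Translating $p$ to the origin, each $A_j-p$ is a linear subspace of dimension at most $n_j-1$, and since $\sum_j n_j=|F_i|=k+r$, their sum satisfies
\[
\dim\sum_{j=1}^{k}(A_j-p)\le\sum_{j=1}^{k}(n_j-1)=(k+r)-k=r.
\]
Consequently $L:=p+\sum_{j=1}^{k}(A_j-p)$ is an affine flat of dimension at most $r$, and $L$ contains the point $x_C\in C$ for every $C\in F_i$, so $L$ meets every member of $F_i$. Enlarging $L$ to an ambient $r$-flat if its dimension is strictly less than $r$ yields the desired transversal.

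The only genuine obstacle here is hidden inside Theorem~\ref{thm:tverberg} itself; once that is in hand, the deduction is just the Carathéodory-style selection of the points $x_C$ and the dimension accounting above. In particular, no additional prime-power hypothesis or topological machinery beyond what is already used for Theorem~\ref{thm:tverberg} is needed.
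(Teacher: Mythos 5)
Your proposal is correct and follows essentially the same route as the paper: verify the algebraic equivalence of the size hypothesis with the bound in Theorem~\ref{thm:tverberg}, apply it to get a Tverberg $k$-partition, select representative points $x_C\in C$ witnessing the common point in each part's convex hull, and then bound the dimension of the joint affine span by $\sum_j(|X_j|-1)=r$ using that the $A_j$ all pass through the common point. The only cosmetic difference is that you translate to linear subspaces and sum them, while the paper works directly with the affine hull of $A_1\cup\cdots\cup A_k$; these are the same computation.
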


\begin{proof} Theorem \ref{thm:tverberg} implies that one of the $F_i$ admits a Tverberg $k$-partition. That is, for $F_i = \{C_1, C_2, \dots, C_{k+r}\}$, there is a $k$-partition $(X_1, X_2, \dots, X_k)$ of $[k+r]$ and a point $x\in \R^d$ such that  
$x\in \conv \big(\bigcup_{i\in X_j} C_i\big)$ for every $1\leq j \leq k$. This means that we can choose a point $x_i\in C_i$, for every $1\leq i \leq k+r$, such that $x\in \conv\{x_i\}_{i\in X_j}$ for every $1\leq j \leq k$. Let $A_j$ denote the affine hull of $\{x_i\}_{i\in X_j}$, and let $A$ be the affine hull of $A_1 \cup A_2 \cup \cdots \cup A_k$. Then $x\in A_1\cap A_2\cap \cdots \cap A_k$, and so
\[\dim A \leq \sum_{j=1}^k \dim A_j = \sum_{j=1}^k (|X_j|-1) = r.\] Since $x_i\in A$ for every $1\leq i \leq k+r$, it follows that $A$ is an affine flat of dimension at most $r$ that intersects every $C_i$.
\end{proof}

\begin{remark} \label{rem:transversal}
We note that the work of Karasev and Montejano deals with the cases $k=2$ \cite[Theorem 8]{km2011} and $k=m$ \cite[Corollary 7]{km2011}, but without any primality condition on $m$. For $k=2$, their theorem requires $d < r+m+1$, while Corollary \ref{cor:transversal} allows for $d < (r+1)m$. However, for the case $k=m$, their theorem requires $d < (\frac{r}{m-1}+1)m$ which is slightly better than our bound $d < (\frac{r+1}{m-1})m$ given by Corollary \ref{cor:transversal}.
\end{remark}

\begin{remark}
The work of Karasev and Montejano makes use of Schubert calculus and the Lusternik–Schnirelmann category of the Grassmannian.
In contrast, 
our proof uses a combination of the configuration space\:/\:test map scheme from topological combinatorics (see e.g. \cite{mato-bu-2003}) 
and {Sarkaria's tensor method} from discrete geometry (see e.g. \cite[section 8.3]{mato-dg-2002}). 
The latter appear more frequently in discrete and computational geometry literature, so we expect these methods to attract broader interest from the community. 
\end{remark}

\section{Proof of Theorem \ref{thm:tverberg}}

We will show that a hypothetical counter-example to Theorem~\ref{thm:tverberg} would contradict the following

\begin{theorem*}[Volovikov \cite{volov-96}]
Let $G = \mathbb{Z}_p \times \mathbb{Z}_p \times \cdots \times  \mathbb{Z}_p $ be the product of finitely many copies, with $p$ prime. Let $X$ and $Y$ be fixed point free $G$-spaces, where $X$ is $n$-connected and $Y$ is finite dimensional and homotopy equivalent to $S^n$. Then there is no $G$-equivariant map $X \to Y$. 
\end{theorem*}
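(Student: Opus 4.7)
The plan is to prove the theorem by contradiction using the Borel construction and a comparison of Serre spectral sequences with $\mathbb{F}_p$-coefficients, where $p$ is the given prime. Suppose a $G$-equivariant map $f\colon X \to Y$ exists. Writing $X_G := X \times_G EG$ and $Y_G := Y \times_G EG$, the map $f$ induces a fiber-preserving map $f_G\colon X_G \to Y_G$ of fibrations over $BG$ with fibers $X$ and $Y$. Since $f_G^*$ commutes with the edge maps from $H^*(BG;\mathbb{F}_p)$, the (Fadell--Husseini) index
\[
\operatorname{ind}_G(Z) \ := \ \ker\!\bigl(H^*(BG;\mathbb{F}_p) \to H^*(Z_G;\mathbb{F}_p)\bigr)
\]
is functorial in equivariant maps, so $\operatorname{ind}_G(Y) \subseteq \operatorname{ind}_G(X)$. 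The strategy is then to exhibit a nonzero class in $\operatorname{ind}_G(Y)$ whose degree is low enough that it cannot possibly lie in $\operatorname{ind}_G(X)$.

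The first step is to analyze $\operatorname{ind}_G(X)$ using the Serre spectral sequence $E_2^{p,q} = H^p(BG; H^q(X;\mathbb{F}_p)) \Rightarrow H^{p+q}(X_G;\mathbb{F}_p)$. The $n$-connectivity of $X$ annihilates the rows $1 \le q \le n$ on the $E_2$ page, and a differential chase shows that for $p \le n+1$ no nonzero differential enters or leaves the node $(p,0)$: outgoing differentials land in rows of negative degree, and incoming ones originate in the vanishing rows. Hence $E_\infty^{p,0} = H^p(BG;\mathbb{F}_p)$ for $p \le n+1$, so the edge homomorphism $H^*(BG;\mathbb{F}_p) \to H^*(X_G;\mathbb{F}_p)$ is injective through degree $n+1$; equivalently, $\operatorname{ind}_G(X)$ vanishes in degrees $\le n+1$.

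The second step is to produce a nonzero class in $\operatorname{ind}_G(Y)$ of degree exactly $n+1$. In the Serre spectral sequence of $Y \to Y_G \to BG$, the $E_2$ page is concentrated on the rows $q = 0$ and $q = n$, so the only potentially nonzero differential is the transgression $d_{n+1}$, whose value on the fundamental class of $H^n(Y;\mathbb{F}_p) \cong \mathbb{F}_p$ is the equivariant Euler class $e \in H^{n+1}(BG;\mathbb{F}_p)$. By construction $e$ lies in $\operatorname{ind}_G(Y)$. To establish $e \neq 0$, observe that the fixed-point-free (hence free) action on $Y$ makes $Y_G$ homotopy equivalent to $Y/G$, which is finite-dimensional, so $H^j(Y_G;\mathbb{F}_p) = 0$ for $j$ sufficiently large. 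On the other hand, since $G = (\mathbb{Z}_p)^s$, the mod-$p$ cohomology of $BG$ is nonzero in every degree (a polynomial algebra when $p = 2$, a polynomial algebra tensored with an exterior algebra when $p$ is odd). Were $d_{n+1}$ zero, the spectral sequence would collapse at $E_2$ and $H^*(Y_G;\mathbb{F}_p)$ would be nonzero in every degree, a contradiction.

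Combining the two steps, $e \in \operatorname{ind}_G(Y) \subseteq \operatorname{ind}_G(X)$ is a nonzero element of $\operatorname{ind}_G(X)$ in degree $n+1$, contradicting the first computation. The main obstacle is the second step, and in particular the nonvanishing of the Euler class: this is the only point where the elementary abelian $p$-group hypothesis is used essentially, via the fact that $H^*(BG;\mathbb{F}_p)$ has nontrivial cohomology in every degree. A subsidiary point is to justify that the $\pi_1(BG) = G$-action on $H^n(Y;\mathbb{F}_p)$ is trivial so that the spectral sequence has untwisted coefficients; this follows because no element of order $p$ can reverse orientation modulo $p$ on a mod-$p$ homology $n$-sphere. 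All remaining steps are routine spectral sequence bookkeeping.
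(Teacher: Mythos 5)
The paper does not prove this statement---it is quoted from Volovikov---so your proposal stands on its own. Its skeleton is the standard index argument, and several pieces are fine: the vanishing of $\operatorname{ind}_G(X)$ in degrees $\le n+1$ from $n$-connectivity, the identification of $\operatorname{ind}_G(Y)$ with the ideal generated by the transgression class $e\in H^{n+1}(BG;\mathbb{F}_p)$ (only two nonzero rows, and $d_{n+1}$ is $H^*(BG;\mathbb{F}_p)$-linear), and the triviality of the $\pi_1$-action on $H^n(Y;\mathbb{F}_p)$. The genuine gap is the parenthetical ``fixed-point-free (hence free)''. For $G=(\mathbb{Z}_p)^s$ with $s\ge 2$, fixed point free only means no point is fixed by all of $G$; points may have nontrivial proper isotropy, so the action need not be free. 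This is not a peripheral case but the whole reason the theorem is needed here: in the paper's application $G\subseteq \Symk$ acts on $Y\setminus B\simeq S^{d(k-1)-1}$ by permuting columns, and that action is explicitly not free (the paper says so), with $k$ a prime power rather than a prime. When the action is not free, $Y_G$ is not homotopy equivalent to $Y/G$, and $H^*(Y_G;\mathbb{F}_p)$ is typically nonzero in arbitrarily high degrees whether or not $e=0$: already for $(\mathbb{Z}_2)^2$ acting on $S^1$ by coordinatewise sign changes one has $e=ab\neq 0$ and yet $E_\infty^{*,0}\cong\mathbb{F}_2[a,b]/(ab)$ is nonzero in every degree. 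So your finite-dimensionality argument for $e\neq 0$ fails, and as written the proof covers only $G=\mathbb{Z}_p$, i.e.\ $k$ prime.

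The standard repair---and the place where the elementary abelian hypothesis is genuinely used---is the localization theorem for $p$-tori (Atiyah--Borel--Segal, Quillen, Hsiang): if $Y$ is finite dimensional (paracompact/finitistic) and $Y^G=\emptyset$, then after localizing $H^*_G(Y;\mathbb{F}_p)$ at the multiplicative set generated by Euler classes of nontrivial characters the unit dies, so some nonzero element of $H^*(BG;\mathbb{F}_p)$ lies in $\operatorname{ind}_G(Y)$. Since your two-row spectral sequence shows $\operatorname{ind}_G(Y)=(e)$, this forces $e\neq 0$, and the rest of your argument then goes through verbatim. Note also that your stated reason for needing $G=(\mathbb{Z}_p)^s$---that $H^*(BG;\mathbb{F}_p)$ is nonzero in every degree---is not the essential one (cyclic $p$-groups have this property too); the essential input is the localization theorem, which is what fails for groups that are not $p$-tori.
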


To reach a contradiction, we use 
{Sarkaria's tensor method} 
to get geometric criteria for the existence of a Tverberg $k$-partition, 
which we then use to construct such an equivariant map.   
We then use discrete Morse theory to show that the domain of this map is sufficiently connected to violate Volovikov's theorem. 

\subsection{The configuration space \texorpdfstring{$K_{n,k}^{*m}$}{}}

Given integers $n > k \geq 1$, we let $V_{n,k}$ denote the set of {\em surjective} maps \[\varphi: [n] \to [k].\] Note that we can equivalently think of  $V_{n,k}$ as the set of $k$-partitions on $[n]$ 
\[\big( \varphi^{-1}(1), \dots, \varphi^{-1}(k)\big).\] 
For a family of sets $F = \{C_1, \dots, C_n\}$ and $\varphi \in V_{n,k}$, we write $\varphi F$ to denote the $k$-partition
\[\big( F^{(1)}, \dots,  F^{(k)} \big)\] where $F^{(i)} = \{C_j \; :  \; j\in \varphi^{-1}(i)\}$.

We define
$K_{n,k}$ to be the simplicial complex on the vertex set $V_{n,k}$ whose faces consists of subsets $\sigma = \{\varphi_1, \dots, \varphi_r\} \subset V_{n,k}$ such that 
\begin{equation}\label{eq:Xinonempty}
X_i = \varphi_1^{-1}(i) \cap \cdots \cap \varphi_r^{-1}(i) \neq \emptyset
\end{equation}
for every $i\in [k]$. 
Note that the facets of $K_{n,k}$ correspond to a choice of representative from each part of a $k$-partition, and the vertices of a facet correspond to all the ways of extending this choice of representatives to a $k$-partition. 

The symmetric group $\Symk$ acts (freely) on $V_{n,k}$ by permuting the parts of the partition.  That is, for $g\in \Symk$ and $\varphi = \big( \varphi^{-1}(1), \dots, \varphi^{-1}(k)\big) \in V_{n,k}$ we have
\[g\varphi = \big( \varphi^{-1}(g(1)), \dots, \varphi^{-1}(g(k))\big),\]
which means that $\Symk$ acts freely on $K_{n,k}$.

In order to apply Volovikov's theorem, we need a lower bound on the connectedness of $K_{n,k}$. This is given by

\begin{lemma} \label{lem:pcomplex}
For all $n > k \geq 1$, the simplicial complex $K_{n,k}$ is 
$(n-k-1)$-connected. 
\end{lemma}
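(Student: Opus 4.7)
The plan is to construct a discrete Morse matching on the face poset of $K_{n,k}$ whose critical faces lie only in dimension $0$ and in dimensions $\geq n-k$. By Forman's theorem, this gives $K_{n,k}$ the homotopy type of a CW complex with a single $0$-cell and all other cells in dimension $\geq n-k$, which is exactly $(n-k-1)$-connectivity.

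The key structural ingredient is a canonical system of distinct representatives assigned to each face. For a nonempty face $\sigma=\{\varphi_1,\ldots,\varphi_r\}$, the sets $X_i(\sigma):=\bigcap_j \varphi_j^{-1}(i)$ are pairwise disjoint and nonempty, so $\iota^*(\sigma):=(\min X_1(\sigma),\ldots,\min X_k(\sigma))$ defines an injection $[k]\hookrightarrow[n]$ and $\sigma$ lies in the facet $F_{\iota^*(\sigma)}=\{\varphi:\varphi\circ\iota^*(\sigma)=\mathrm{id}_{[k]}\}$. The fundamental monotonicity is that enlarging $\sigma$ shrinks each $X_i(\sigma)$ and hence can only raise $\iota^*(\sigma)$ in the coordinate-wise order on $[n]^k$; in particular, adding a vertex already lying in $F_{\iota^*(\sigma)}$ preserves $\iota^*(\sigma)$ exactly.

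The matching stratifies the faces by the value of $\iota^*$: within the stratum $S_\iota:=\{\sigma:\iota^*(\sigma)=\iota\}$, fix the pivot $\varphi^\star_\iota\in F_\iota$ defined by $\varphi^\star_\iota(\iota(i))=i$ and $\varphi^\star_\iota(y)=1$ for $y\in[n]\setminus\iota([k])$, and tentatively pair $\sigma\leftrightarrow\sigma\triangle\{\varphi^\star_\iota\}$ whenever both sides remain in $S_\iota$. The monotonicity makes the ``add'' direction behave well, while the faces for which removing $\varphi^\star_\iota$ would drop $\iota^*$ are left critical at this stage. Acyclicity is automatic since each stratum is collapsed by a cone-point matching while cross-stratum Hasse arrows can only move $\iota^*$ downward.

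The hard part is the dimension bound on the critical faces. Unpacking the definitions, a critical face $\sigma\in S_\iota$ admits a ``witness'' pair $(y,i)$ with $y\in[n]\setminus\iota([k])$, $y<\iota(i)$, $i\neq 1$, and $\varphi(y)=i$ for every $\varphi\in\sigma\setminus\{\varphi^\star_\iota\}$. A single witness only forces $|\sigma|\geq 2$, so the one-step matching is insufficient when $n-k>1$. My plan is to iterate: on the residual critical faces, run further cone-point collapses with secondary pivots designed to destroy one outstanding witness at a time, pairing faces that still share the same extended witness profile. Since $[n]\setminus\iota([k])$ has exactly $n-k$ elements available to serve as $y$-coordinates of witnesses, after at most $n-k$ rounds every surviving critical face must contain $\varphi^\star_\iota$ together with at least $n-k$ further vertices, i.e., have dimension $\geq n-k$. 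The most subtle point will be to verify that the iterated construction assembles into a single globally acyclic matching; I plan to arrange this by defining a lexicographic invariant on faces combining $\iota^*(\sigma)$ with the evolving witness profile, and checking that every matched pair moves strictly downward in that order.
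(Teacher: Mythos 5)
Your plan---running discrete Morse theory directly on $K_{n,k}$---takes a genuinely different route from the paper, which first invokes Quillen's fiber lemma to replace $K_{n,k}$ by the much smaller polyhedral complex $C_{n,k}$ of partial surjections, and then performs a short inductive Morse matching there. Your first round is internally consistent: $\sigma\mapsto\iota^*(\sigma)$ is order-preserving, adding $\varphi^\star_\iota$ to any $\sigma\in S_\iota$ keeps you in $S_\iota$, and the patchwork lemma gives acyclicity. But there is a concrete gap already before the iteration.

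For every injection $\iota\colon[k]\to[n]$ with $\iota(1)<\min\big([n]\setminus\iota([k])\big)$, the singleton $\{\varphi^\star_\iota\}$ lies in $S_\iota$, and since $\emptyset\notin S_\iota$ it is a critical $0$-cell of your matching. In particular every bijection $\iota\colon[k]\to[k]\subseteq[n]$ contributes such a $0$-cell, so for $k\geq 2$ there are at least $k!\geq 2$ of them. Your witness analysis extracts the pair $(y,i)$ from the nonempty face $\tau=\sigma\setminus\{\varphi^\star_\iota\}$ being pushed out of $S_\iota$; for these singletons $\tau=\emptyset$, and indeed for $\iota=\mathrm{id}_{[k]}$ there is no pair $(y,i)$ with $y\notin\iota([k])$ and $y<\iota(i)$ at all. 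So the claim that ``a critical face $\sigma\in S_\iota$ admits a witness'' fails precisely on the critical cells that obstruct your target of a \emph{single} critical $0$-cell. The iteration you sketch only adds and removes vertices to kill witnesses in higher-dimensional faces, so it cannot repair a lone vertex; you would need to redesign the stratification (or explicitly cancel the surplus $0$-cells against distinguished $1$-cells) before the plan can get off the ground. Beyond this, the secondary pivots, the ``extended witness profile,'' and the global acyclicity of the assembled matching are left open, as you acknowledge. The paper's detour through $C_{n,k}$ is exactly what avoids this bookkeeping: its cells are products of simplices indexed by partial surjections, and a simple element matching plus induction on $k$ yields critical cells of a single dimension without any witness machinery.
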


We give the proof of this lemma in section \ref{sec:pcomplex}.
For now we proceed with the proof of Theorem \ref{thm:tverberg} assuming the bound on the connectedness of $K_{n,k}$.
We will construct an equivariant map on the $m$-fold join $K_{n,k}^{*m} = K_{n,k}*\dots*K_{n,k}$. 

\subsection{Sarkaria's criterion} Here we demonstrate one of the standard methods for proving Tverberg's theorem, Sarkaria's tensor method. The method is usually applied for collections of points, but here we apply it to families of convex sets, similar to the approach  taken in \cite{sarkar22}.

For each $i\in [k]$, define the vector $v_i\in \mathbb{R}^k$ as 
\[v_i = e_i - \textstyle{\frac{1}{k}}\mathbf{1}, \] where $e_i$ is the $i$th standard unit vector and $\mathbf{1} = (1,\dots, 1) \in \mathbb{R}^k$.  Observe that the $v_i$ satisfy only one linear dependency up to a scalar multiple, which is 
\begin{equation}\label{eq:lindep}
v_1 + \cdots + v_k = 0.
\end{equation}
Next, define the map
\[
\begin{array}{cccc}
L_i :  & \mathbb{R}^d & \to & \mathbb{R}^{(d+1)\times k}\\
& x & \mapsto & \brak{x}{1} \otimes v_i
\end{array}
\]

where $\brak{x}{1}$ denotes the vector in $\mathbb{R}^{d+1}$ obtained from $x$ by appending an additional coordinate and setting this equal to 1. For a given $x\in \mathbb{R}^d$, we will regard the image $L_i(x)$ as a $(d+1)\times k$ matrix. Observe that since each $v_i$ is orthogonal to $\mathbf{1} \in \mathbb{R}^k$, it follows that $L_i(x)$ belongs to the subspace
\[Y=\{\:[w_1 \; \cdots \; w_k]\: : \: w_1+\dots+w_k=\mathbf{0}\} \subset \mathbb{R}^{(d+1)\times k}.\] We note that the symmetric group $\Symk$ acts on the subspace $Y$ by permuting the columns, and for $g\in \Symk$ and $L_i(x)\in Y$, we have 
\[gL_i(x) = g(\textstyle \brak{x}{1} \otimes v_i) = \textstyle \brak{x}{1} \otimes v_{g(i)} = L_{g(i)}(x).\] Observe that the action is not free, but it is {\em fixed-point free} on $Y\setminus\{0\}$. 

For a convex set $C\subset \mathbb{R}^d$, we write $L_iC$ to denote the set
\[L_iC = \{L_i(x) \; : \; x\in C\},\] which is a convex subset of $Y$. The crucial step of  the Sarkaria method is the following

\begin{observation}\label{obs:sarkaria}
Let $F = \{C_1, \dots, C_n\}$ be a family of convex sets in $\mathbb{R}^d$ and $\varphi\in V_{n,k}$. If  
\[0 \in \text{\em conv} \left( L_{\varphi(1)}C_1  \cup \cdots \cup L_{\varphi(n)}C_n \right),\] 
then 
$\varphi F$ is a Tverberg $k$-partition.
\end{observation}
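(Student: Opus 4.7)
The plan is to unpack the hypothesis into the existence of a witnessing convex combination and then use the algebraic rigidity of the vectors $v_1,\dots,v_k$ to conclude that the pieces of this combination assemble into a common point in every $\mathrm{conv}\,F^{(i)}$.

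First I would write the hypothesis $0 \in \mathrm{conv}\bigl(L_{\varphi(1)}C_1 \cup \cdots \cup L_{\varphi(n)}C_n\bigr)$ explicitly: there exist points $x_j \in C_j$ and nonnegative weights $\lambda_j$ with $\sum_j \lambda_j = 1$ such that
\[
\sum_{j=1}^n \lambda_j \,\textstyle\brak{x_j}{1} \otimes v_{\varphi(j)} \;=\; 0.
\]
Grouping the sum according to the fibers of $\varphi$, this rearranges to $\sum_{i=1}^k w_i \otimes v_i = 0$ in $\mathbb{R}^{d+1}\otimes\mathbb{R}^k$, where
\[
w_i \;=\; \sum_{j\in\varphi^{-1}(i)} \lambda_j \,\textstyle\brak{x_j}{1} \;\in\; \mathbb{R}^{d+1}.
\]

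Next I would exploit the hypothesis (\ref{eq:lindep}) that the only linear dependence among the $v_i$ (up to scaling) is $v_1+\cdots+v_k=0$. Reading the tensor identity coordinate-by-coordinate in $\mathbb{R}^{d+1}$ yields, for each coordinate $\ell$, a dependence $\sum_i (w_i)_\ell v_i = 0$ in $\mathbb{R}^k$, forcing $(w_1)_\ell = \cdots = (w_k)_\ell$. Hence all $w_i$ coincide; call the common value $w\in\mathbb{R}^{d+1}$.

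Finally I would read off the Tverberg partition from $w$. The $(d+1)$-st coordinate of $w_i$ is $\sum_{j\in\varphi^{-1}(i)} \lambda_j$, and these common values sum over $i$ to $1$, so each equals $1/k$; in particular each fiber $\varphi^{-1}(i)$ is nonempty (as guaranteed by surjectivity of $\varphi$) and carries nontrivial weight. Setting $p \in \mathbb{R}^d$ to be $k$ times the first $d$ coordinates of $w$, we obtain
\[
p \;=\; \sum_{j\in\varphi^{-1}(i)} (k\lambda_j)\, x_j
\]
for every $i$, a convex combination (the coefficients are nonnegative and sum to $1$) of points drawn from the sets in $F^{(i)}$. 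Thus $p \in \bigcap_{i=1}^k \mathrm{conv}\bigl(\bigcup_{C\in F^{(i)}} C\bigr)$, certifying that $\varphi F$ is a Tverberg $k$-partition. The only mildly delicate step is the passage from the tensor equation to the equality $w_1=\cdots=w_k$, which rests entirely on the uniqueness (up to scalar) of the linear relation among the $v_i$; the rest is bookkeeping.
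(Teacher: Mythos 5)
Your argument is correct and uses the same key idea as the paper: pull the convex combination witness back to weighted representatives, observe that each coordinate of the tensor equation is a linear dependence among the $v_i$, and invoke the uniqueness (up to scalar) of that dependence to force the blocks to coincide. The paper phrases this by first collapsing each fiber to a single point $x_i\in\conv\bigl(\bigcup_{j\in\varphi^{-1}(i)}C_j\bigr)$ with weight $\alpha_i$ and concluding $\alpha_1=\cdots=\alpha_k$ and $x_1=\cdots=x_k$, while you keep the $n$ representatives and aggregate into $w_i$'s, but this is just a cosmetic difference in bookkeeping.
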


Indeed, suppose $0 = \alpha_1L_1(x_1) + \cdots  + \alpha_kL_k(x_k)$ is a convex combination, where 
\[L_i(x_i) = \textstyle{ \brak{x_i}{1}} \otimes v_i \in \conv  \left( \bigcup_{j\in \varphi^{-1}(i)} L_iC_j \right).\]
Then in each coordinate, the $\alpha_i\brak{x_i}{1}$ are the coefficients of a linear dependency of the $v_i$, and since \eqref{eq:lindep} is the unique linear dependency up to scalar multiple, we have 
\[\alpha_1 = \cdots = \alpha_k \;\;\; \text{ and } \;\;\; x_1 = \cdots = x_k.\]
We also have $x_i \in \conv  \left( \bigcup_{j\in \varphi^{-1}(i)} C_j \right)$, 
so $\varphi F$ is a Tverberg $k$-partition.

\subsection{The test map \texorpdfstring{$f$}{}} 
Consider a single family $F_1 = \{C_1, \dots, C_n\}$ of compact convex sets in $\mathbb{R}^d$ which {\em does not} have a Tverberg $k$-partition. We show how this gives us an equivariant map
\[f_1 :K_{n,k} \to Y.\]
For $\varphi \in V_{n,k}$, consider the $k$-partition $\varphi F_1$. By hypothesis, this is not a Tverberg $k$-partition, so by Observation \ref{obs:sarkaria}, there is a vector $a_{\varphi} \in Y$ which defines an open halfspace 
\[H_{\varphi} = \{y \in Y : a_{\varphi} \cdot y > 0 \}\] such that 
\begin{equation}\label{eq:halfspaces}
 L_{\varphi(1)}C_{1} \cup   \cdots \cup L_{\varphi(n)}C_{n} \subset H_{\varphi}.
\end{equation}
It is important that the vectors $a_{\varphi}$ are chosen such that 
\[ga_{\varphi} = a_{g\varphi}\]
for every $g\in \Symk$. This can be done by first choosing one vector $a_{\varphi}$ in each $\Symk$ orbit, and then allowing the rest of the vectors in that orbit to be defined accordingly.

To verify that such a choice is valid, suppose that $a_{\varphi}$ has been chosen such that the containment \eqref{eq:halfspaces} holds and consider $g\in \Symk$.  Since $gL_i(x) = L_{g(i)}(x)$, we get
\[
\begin{array}{rclcl}
L_{g\varphi(i)}C_i & = & g(L_{\varphi(i)}C_i)  &\subset & g(H_{\varphi}) \\
& & & = & \{ g(y) :  a_\varphi \cdot y >0 \} \\
& & & = & \{z : a_\varphi \cdot g^{-1}(z)>0\} \\
& & & = & \{z : ga_\varphi \cdot z >0\} \\
& & & = & \{z : a_{g\varphi}\cdot z > 0\} = H_{g\varphi}.
\end{array}
\]
The equivariant map $f_1: K_{n,k} \to Y$ is defined by affine extension of $a_{\bullet}$ by setting 
\[f_1(\sigma) = \conv \{a_\varphi : \varphi \in \sigma\},\]
for every face $\sigma\in K_{n,k}$.

\medskip

Now consider the setting of Theorem \ref{thm:tverberg} where we have families $F_1, \dots, F_m$ of compact convex sets in $\mathbb{R}^d$, each of size $n$, and suppose none of them have a Tverberg $k$-partition. For every $1\leq i \leq m$, the family $F_i$ gives us an equivariant map $f_i : K_{n,k} \to Y$ as defined above. 
By taking joins, we get an equivariant map 
\[f  = f_1 * f_2 * \cdots * f_m : K_{n,k}^{*m} \to Y.\] 
(Note that the symmetric group $\Symk$ acts freely on $K^{*m}_{n,k}$ by applying the group action to each component of the join.)

\subsection{Using colorful intersections to avoid a subspace} 
We now want to determine the range of $f$, and for this we use the assumption that the families $F_i$ satisfy the colorful intersection property. 

First we note that the facets of $K_{n,k}$ are in one-to-one correspondence with {\em injective} functions 
\[\rho: [k] \to [n].\]
In particular, each facet $\sigma \in K_{n,k}$ corresponds to the unique injection $\rho : [k] \to [n]$ that has $\sigma$ as its set of left inverses.  That is,
\[\varphi \in \sigma \iff \varphi \circ \rho = \text{id}_{[k]}.\]

Consider a facet $\sigma  = \sigma_1 * \sigma_2 *\cdots * \sigma_m \in K^{*m}_{n,k}$ with the corresponding injections $\rho_i$ satisfying $\varphi \circ \rho_i = \text{id}_{[k]}$ for all $\varphi \in \sigma_i$. For each $1\leq i \leq m$, we apply $\rho_i$ to select a $k$-tuple of distinct convex sets 
\[ C^{(i)}_{\rho_i(1)} , C^{(i)}_{\rho_i(2)} , \dots, C^{(i)}_{\rho_i(k)}  \in F_i. \] 
The colorful intersection property now guarantees that, for every $1\leq j \leq k$, we can select a point $x_j\in \mathbb{R}^d$ which satisfies
\[x_j \in  C^{(1)}_{\rho_1(j)}  \cap  C^{(2)}_{\rho_2(j)}  \cap  \cdots  \cap  C^{(m)}_{\rho_m(j)}.\] 

Now consider a vertex $\varphi \in \sigma_i$ and the halfspace $H_\varphi = \{y\in Y : a_\varphi \cdot y >0\}$ from the definition of the test map $f$.
Since $\varphi \circ \rho_i = \text{id}_{[k]}$, we get 
\[L_1C^{(i)}_{\rho_i(1)}  \cup L_2C^{(i)}_{\rho_i(2)} \cup \cdots \cup L_kC^{(i)}_{\rho_i(k)} \subset H_\varphi,    \]
by the containment \eqref{eq:halfspaces}, which in turn implies
\[\big\{ L_1(x_1), L_2(x_2), \dots, L_k(x_k) \big\} \subset H_\varphi. \]
Thus, for every vertex $\varphi \in \sigma$ and every $1\leq j \leq k$, we have
$a_\varphi \cdot L_j(x_j) > 0$, which gives us
\begin{equation}\label{eq:facet}
f(\sigma) = \conv \{a_\varphi : \varphi\in \sigma \} \subset \{y\in Y : L_j(x_j) \cdot y > 0\}.
\end{equation}

We claim that $f(\sigma)$ does not intersect the subspace 
\[B := e_{d+1}\otimes\R^k.\]  
For the sake of contradiction, suppose there were a vector $c = (c_1, c_2, \dots, c_k) \in \R^k$ such that 
\[ b = e_{d+1} \otimes c \in f(\sigma) \cap B.\]
Let $j\in [k]$ be a coordinate such that $c_j$ is minimized.  Then 
\[ v_j \cdot c = \textstyle  \frac{k-1}{k} c_j - \frac{1}{k} \sum_{i\neq j} c_i \leq 0,\]
and so
\[\textstyle L_j(x_j) \cdot b = \left(  \brak{x_j}{1} \otimes v_j \right)  \cdot  \left( e_{d+1} \otimes c \right) =\left( \brak{x_j}{1} \cdot e_{d+1} \right) \left(v_j \cdot c \right) \leq 0,\]
but $L_j(x_j) \cdot b > 0$ by \eqref{eq:facet} since $b \in f(\sigma)$, so $f(\sigma)$ cannot intersect $B$.

We conclude that the range of $f$ is contained in $Y\setminus B$, so we have an equivariant map
\[f : K^{*m}_{n,k} \to Y\setminus B.\]

\subsection{Finishing the proof} 
By Lemma \ref{lem:pcomplex}, it follows that the complex $K^{*m}_{n,k}$ is $(m(n-k+1)-2)$-connected (see e.g. \cite[Proposition 4.4.3]{mato-bu-2003}), and since $n > (\frac{d}{m}+1)(k-1)$, we get 
\[m(n-k+1)-2 >  d(k-1)-2.\]
Therefore $K^{*m}_{n,k}$ is at least $(d(k-1)-1)$-connected. 

The symmetric group $\Symk$ acts on $Y\setminus B$ by permuting columns, which makes it a fixed-point free action. If $k = p^r$, then the action is also fixed-point free with respect to the subgroup $G = \mathbb{Z}_p \times \mathbb{Z}_p \times \cdots \times \mathbb{Z}_p$. The subspace $Y\cap B^\perp$ has dimension $d(k-1)$ as it consists of the matrices in $Y$ whose $(d+1)$st row is equal to the 0-vector, and therefore $Y\setminus B$ is homotopy equivalent to $S^{d(k-1)-1}$. Consequently,  the existence of the map $f$ contradicts Volovikov's theorem. \qed

\section{Proof of Lemma \ref{lem:pcomplex}} \label{sec:pcomplex}

Here we show  that the simplicial complex $K_{n,k}$ is $(n-k-1)$-connected. The proof is in two steps. First we  define a polyhedral complex $C_{n,k}$ whose cells correspond to {\em partial surjective functions} $\pi : [n] \to [k]$, and use Quillen's fiber lemma to show that $C_{n,k}$ is homotopy equivalent to $K_{n,k}$. We then bound the connectedness of $C_{n,k}$ using discrete Morse theory.

\subsection{The complex of partial surjections}
Given integers $n > k \geq 1$, we let $C_{n,k}$ denote the set of {\em partial surjective functions} 
\[\psf : [n] \to [k].\]
Equivalently, we can think of $C_{n,k}$ as the set of $k$-partitions
\[\big( \psf^{-1}(1), \psf^{-1}(2), \dots, \psf^{-1}(k)\big)\]   
of {\em subsets} of $[n]$. Furthermore, we may identify an element $\psf \in C_{n,k}$ with the product of simplices
\[\Delta_\psf := 2^{\psf^{-1}(1)} \times 2^{\psf^{-1}(2)} \times \cdots \times 2^{\psf^{-1}(k)}, \] whose geometric realization is a convex polytope of dimension $|\psf^{-1}([k])| - k$. 

Observe that for $\eta, \gamma \in C_{n,k}$, if $\eta^{-1}([k]) \subset \gamma^{-1}([k])$  and $\eta(x) = \gamma(x)$ for all $x\in \eta^{-1}([k])$, then $\Delta_\eta$ is a face of $\Delta_{\gamma}$. Consequently, $C_{n,k}$ has the structure of a polyhedral complex
(see Figure \ref{fig:complexes}). Note that  $C_{n,k}$ may also be viewed as a poset where the faces are ordered by inclusion.
\begin{figure} \centering
 \begin{tikzpicture}%[fill opacity = 0.3]

\begin{scope}[yshift = 7cm, xshift = -3cm]
\draw (0,0) --++ (3,-1) --++ (2.5,1) --++ (-1.5,2) --++ (-3,1) -- cycle ;
\draw (3,-1) --++ (1,3)  ;
\draw[opacity = 0.3] (0,0) --++ (2.5,1) --++ (-1.5,2)  (2.5,1) --++ (3,-1) ;

\filldraw[black] (0,0) circle (1pt);
\filldraw[black] (3,-1) circle (1pt);
\filldraw[black] (5.5,0) circle (1pt);
\filldraw[black] (4,2) circle (1pt);
\filldraw[black] (1,3) circle (1pt);
\filldraw[black, opacity =0.4] (2.5,1) circle (1pt);

\node[left] at (0,0) {\tiny $ 2 \: | \: 3 \: | \: 4 $}; 
\node[above] at (1,3) {\tiny $ 1 \: | \: 3 \: | \: 4 $}; 
\node[right, opacity =0.4] at (2.35,1.25) {\tiny $ 5 \: | \: 3 \: | \: 4 $};
\node[below] at (3,-1) {\tiny $ 2 \: | \: 3 \: | \: 6 $}; 
\node[right] at (5.5,0) {\tiny $ 5 \: | \: 3 \: | \: 6 $}; 
\node[above] at (4.5,1.9) {\tiny $ 1 \: | \: 3 \: | \: 6 $}; 

\fill[opacity=0.2, blue] (0,0) --++ (3,-1) --++ (2.5,1) --++ (-1.5,2) --++ (-3,1) ;
\fill[opacity=0.2, blue] (3,-1) --++ (2.5,1) --++ (-1.5,2);
\end{scope}

\begin{scope}[yshift = 0cm]
\filldraw[black] (90:1) circle (1pt);
\filldraw[black] (210:1) circle (1pt);
\filldraw[black] (330:1) circle (1pt);

\filldraw[black] (60:3) circle (1pt);
\filldraw[black] (120:3) circle (1pt);
\filldraw[black] (180:3) circle (1pt);
\filldraw[black] (240:3) circle (1pt);
\filldraw[black] (300:3) circle (1pt);
\filldraw[black] (360:3) circle (1pt);

\filldraw[black] (30:6) circle (1pt);
\filldraw[black] (150:6) circle (1pt);
\filldraw[black] (270:6) circle (1pt);

\draw[black] (270:6) arc[start angle=-60, end angle=0, radius= {sqrt(108)} ];
\draw[black] (30:6) arc[start angle=60, end angle=120, radius= {sqrt(108)} ];
\draw[black] (150:6) arc[start angle=180, end angle=240, radius= {sqrt(108)} ];

\fill[blue, opacity=0.1] (270:6) arc[start angle=-60, end angle=0, radius= {sqrt(108)} ];
\fill[blue, opacity=0.1] (30:6) arc[start angle=60, end angle=120, radius= {sqrt(108)} ];
\fill[blue, opacity=0.1](150:6) arc[start angle=180, end angle=240, radius= {sqrt(108)} ];
\fill[blue, opacity=0.1] (30:6) -- (150:6) -- (270:6);

\draw (90:1) -- (210:1) -- (330:1) --cycle;
\draw (0:3)-- (60:3)-- (120:3)-- (180:3)-- (240:3)-- (300:3)--cycle;

\draw (330:1)-- (360:3)-- (30:6) (90:1)-- (120:3)-- (150:6) (210:1)-- (240:3)--(270:6) ;
\draw (90:1)-- (60:3)-- (30:6) (210:1)-- (180:3)-- (150:6) (330:1)-- (300:3)--(270:6) ;

\node at (0:0) {\tiny $ 4 \: | \: 123$};

\node at (30:1.6) {\tiny $ 24 \: | \: 13$};
\node at (90:2) {\tiny $ 234 \: | \: 1$};
\node at (150:1.6) {\tiny $ 34 \: | \: 12$};
\node at (210:2) {\tiny $ 134 \: | \: 2$};
\node at (270:1.6) {\tiny $ 14 \: | \: 23$};
\node at (330:2) {\tiny $ 124 \: | \: 3$};

\node at (30:3.75) {\tiny $ 2 \: | \: 134$};
\node at (90:3.5) {\tiny $ 23 \: | \: 14$};
\node at (150:3.75) {\tiny $ 3 \: | \: 124$};
\node at (210:3.5) {\tiny $ 13 \: | \: 24$};
\node at (270:3.75) {\tiny $ 1 \: | \: 234$};
\node at (330:3.5) {\tiny $ 12 \: | \: 34$};

\node at (-10:5.5) {\tiny $ 123 \: | \: 4$};

\node at (90:1.25) {\fontsize{3.5}{4.5}\selectfont $ 4 \, | \, 1$};
\node at (61.5:3.15) {\fontsize{3.5}{4.5}\selectfont $ 2 \, | \, 1$};
\node at (118.5:3.15) {\fontsize{3.5}{4.5}\selectfont $ 3 \, | \, 1$};
\node at (30:6.25) {\fontsize{3.5}{4.5}\selectfont $ 2 \, | \, 4$};

\node at (210:1.25) {\fontsize{3.5}{4.5}\selectfont $ 4 \, | \, 2$};
\node at (181.5:3.2) {\fontsize{3.5}{4.5}\selectfont $ 3 \, | \, 2$};
\node at (238.5:3.2) {\fontsize{3.5}{4.5}\selectfont $ 1 \, | \, 2$};
\node at (150:6.25) {\fontsize{3.5}{4.5}\selectfont $ 3 \, | \, 4$};

\node at (330:1.25) {\fontsize{3.5}{4.5}\selectfont $ 4 \, | \, 3$};
\node at (301.5:3.2) {\fontsize{3.5}{4.5}\selectfont $ 1 \, | \, 3$};
\node at (358.5:3.2) {\fontsize{3.5}{4.5}\selectfont $ 2 \, | \, 3$};
\node at (270:6.2) {\fontsize{3.5}{4.5}\selectfont $ 1 \, | \, 4$};

\end{scope}

\end{tikzpicture}
 
\caption{Above: The polyhedral cell of $C_{6,3}$ corresponding to the ordered partition $\big( \{1,2,5\}, \{3\}, \{4,6\} \big)$, which we express more succinctly by $(125 \: | \: 3 \: | \: 46)$.  Below: The cell complex $C_{4,2}$. }
\label{fig:complexes}
\end{figure}
We need the following.

\begin{lemma}\label{lem:homotopeq}
For all integers $n> k \geq 1$, we have a homotopy equivalence $K_{n,k} \simeq C_{n,k}$.
\end{lemma}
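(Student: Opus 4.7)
The plan is to apply Quillen's fiber lemma to a natural map between the face posets of the two complexes. Since $|K_{n,k}|$ and $|C_{n,k}|$ are each homeomorphic (via barycentric subdivision) to the order complexes of their respective face posets, it suffices to produce a homotopy equivalence between those order complexes.

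For a nonempty face $\sigma = \{\Sf_1,\ldots,\Sf_r\}$ of $K_{n,k}$, I would define the \emph{common agreement} $\psf_\sigma \colon [n]\to[k]$ to be the partial function with $\psf_\sigma(j)=i$ exactly when $\Sf_s(j)=i$ for every $s$. The face condition \eqref{eq:Xinonempty} is precisely what forces $\psf_\sigma$ to be a partial surjection, so $\psf_\sigma \in C_{n,k}$. The assignment $g \colon \sigma \mapsto \psf_\sigma$ is order-reversing, because enlarging $\sigma$ can only shrink the set of indices on which all $\Sf_s$ agree. In the opposite direction, for each $\psf \in C_{n,k}$ let $\sigma_\psf \subseteq V_{n,k}$ denote the set of total surjections extending $\psf$; any $j \in \psf^{-1}(i)$ serves as a common witness, so $\sigma_\psf$ is itself a face of $K_{n,k}$.

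The key identity,
\[
\psf_\sigma \text{ extends } \psf \;\Longleftrightarrow\; \sigma \subseteq \sigma_\psf,
\]
is immediate from the definitions, and it identifies the Quillen-type fiber $g^{-1}\bigl(\{\psf' \in C_{n,k} : \psf' \geq \psf\}\bigr)$ with the poset of nonempty subsets of $\sigma_\psf$, i.e.\ the face poset of a simplex on vertex set $\sigma_\psf$, whose order complex is contractible. Viewing $g$ as an order-preserving map into $C_{n,k}^{op}$, Quillen's fiber lemma then delivers the homotopy equivalence $K_{n,k} \simeq C_{n,k}$. I do not foresee any serious obstacle: the only things requiring care are the bookkeeping of posets and their opposites when invoking Quillen's lemma (principal filters versus principal ideals), and verifying that $\sigma_\psf$ really is a face of $K_{n,k}$, which is exactly what turns the fiber into a genuine simplex rather than a more complicated subcomplex.
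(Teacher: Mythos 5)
Your proposal is correct and takes essentially the same route as the paper: the same order-reversing map $g(\sigma)=\psf_\sigma$ given by the common agreement of the surjections in $\sigma$, the same identification of the fiber over the principal filter $C_{\succeq\psf}$ with the set of (nonempty) subsets of $\sigma_\psf$, and the same appeal to Quillen's fiber lemma. The only additions in your write-up are explicit verifications (that $\sigma_\psf$ is itself a face of $K_{n,k}$, and the $C_{n,k}^{op}$ bookkeeping) that the paper leaves implicit.
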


\begin{proof}
Let $K=K_{n,k}$ and $C=C_{n,k}$. 
Our goal is to construct an order-reversing map 
\[g: K \to C \] such that $g^{-1}(C_{\succeq \eta})$ is contractible for every $\eta \in C$, 
where $C_{\succeq\eta} = \{\gamma\in C:\Delta_\gamma \supset \Delta_\eta\}$. 
This will establish the desired homotopy equivalence by Quillen's fiber lemma \cite[Theorem 10.5]{bjorner}. 

To this end, consider a face $\sigma = \{  \Sf_1, \Sf_2, \dots, \Sf_{|\sigma|} \}\in K_{n,k}$ and define the $k$-tuple
\[X = (X_1, X_2, \dots, X_k), \] 
where $X_j = \Sf^{-1}_1(j) \cap \Sf^{-1}_2(j) \cap \cdots \cap \Sf^{-1}_{|\sigma|}(j)$ as in \eqref{eq:Xinonempty}. 
By definition, each $X_j$ is nonempty, and so $X$ is a $k$-partition of a subset of $[n]$.  We may therefore identify $X$ with a partial surjective function $\psf_\sigma : [n] \to [k]$
given by $\psf_\sigma(i) = j$ when 
$\Sf(i)=j$ for each $\Sf \in \sigma$, 
and $\psf_\sigma(i)$ is undefined otherwise,  
or equivalently 
$\psf_\sigma^{-1}(j) = X_j$. 
By setting 
\[g(\sigma) := \eta_\sigma,\]
we have  
\begin{align*} \label{eq:monotone}
\tau \subset \sigma 
&\implies 
\psf_\sigma^{-1}(j) \subset \psf_\tau^{-1}(j) \text{ for each $j\in[k]$} \\
&\implies 
\Delta_{g(\sigma)} 
%= 2^{X_1(\sigma)} \times\cdots\times 2^{X_k(\sigma)} 
\subset \Delta_{g(\tau)},
\end{align*}
so we obtain a surjective map $g : K \to C$, which is order-reversing. 
%. Given $\psf\in C_{n,k}$ we define \[\Sf_\psf\]
The upper set $C_{\succeq \psf}$ is the set of partial functions $\gamma$ 
such that $\gamma^{-1}(j) \supset \psf^{-1}(j)$ for each $j$, 
that is, $C_{\succeq \psf}$ is the set of extensions of $\psf$, so 
\[g^{-1}(C_{\succeq \psf}) = 2^{\sigma_\psf}\] where 
\[\sigma_\psf : = \big\{ \Sf \in K_{n,k} : \Sf(i) = \psf(i) \text{ if } \eta(i) \text{ is defined} \big\}. \]
Thus the fibers are simplices, 
and the associated complexes $K_{n,k}$ and $C_{n,k}$ 
are homotopy equivalent by Quillen's fiber lemma.
\end{proof}

\subsection{Acyclic matchings}

We now apply discete Morse theory \cite{forman} to determine the connectedness of $C_{n,k}$.
This means that we now view $C_{n,k}$ as a poset, and it contains  the {\em empty face} as a unique minimal element. 

\medskip

Recall that a {\em matching} in a poset $P$ is a matching in the underlying graph of the Hasse diagram of $P$. In other words, a matching $M$ in $P$ is a collection of pairs 
\[ M = \big\{\{a_1,b_1 \}, \{a_2, b_2\}, \dots, \{a_t,b_t\}\big\}, \] where the 
$a_i$ and $b_j$ are all distinct, and $a_i$ is an immediate predecessor of $b_i$ for every $i$.
The matching $M$ is called {\em cyclic} if there is a subsequence of indices $i_1, i_2, \dots, i_s \in [t]$ such that 
\[\begin{array}{lcl}
    a_{i_2} & \prec & b_{i_1} \\
    a_{i_3} & \prec & b_{i_2} \\
     & \vdots & \\
    a_{i_s} & \prec & b_{i_{s-1}} \\
    a_{i_1} & \prec & b_{i_s}
\end{array}
\]
If no such subsequence exists, then the matching $M$ is called {\em acyclic}.

\medskip

In the case when $P$ is the face lattice of a polyhedral complex and $M$ is an acyclic matching, then the unmatched elements of $P$ are called the {\em critical cells} of the matching.
One of the fundamental theorems of discrete Morse theory \cite[Theorem 4.7]{jonsson} (see also \cite[Theorem 11.13]{kozlov}) asserts that if all the critical cells of $M$ have dimension at least $d$, then the polyhedral complex is $(d-1)$-connected.
Our goal is therefore to show the following
\begin{lemma} \label{lem:acyclic}
For all $n> k \geq 1$, there is an acyclic matching $M$ on $C_{n,k}$ whose critical cells (if there are any) have dimension $n-k$. 
\end{lemma}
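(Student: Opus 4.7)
The plan is to construct an acyclic matching on the augmented face poset of $C_{n,k}$ (with empty face $\hat{0}$ as unique minimum) whose critical cells all have dimension $n-k$, so that the discrete Morse theorem cited in the paper then yields the desired $(n-k-1)$-connectedness.

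The primary matching rule is a lexicographic sweep: for each partial surjection $\eta$, I scan the indices $1,2,\dots,n$ in order and pair $\eta$ with the cell obtained by toggling the first admissible index $i^*$. Concretely, $i^*$ is the smallest $i$ such that either (R) $i\in\mathrm{dom}(\eta)$, $|\eta^{-1}(\eta(i))|\ge 2$, and $\eta(i)=\min\bigl([k]\setminus\eta([i-1]\cap\mathrm{dom}(\eta))\bigr)$, or (A) $i\notin\mathrm{dom}(\eta)$ and some value of $[k]$ is missing from the prefix image $\eta([i-1]\cap\mathrm{dom}(\eta))$. When (R) holds we pair $\eta$ with $\eta\setminus\{i^*\}$; when (A) holds we pair with $\eta\cup\{(i^*,v^*)\}$, where $v^*$ is the smallest missing value. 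The canonical choice of $v^*$ together with the minimality of $i^*$ makes the rule an involution, since $i^*$ is preserved within each matched pair from either side.

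The cells unmatched by the primary rule are precisely those whose value-prefix already covers $[k]$ before any removal becomes admissible. These include $\hat{0}$, the $k!$ bijections $\eta\colon[k]\to[k]$ with $\mathrm{dom}(\eta)=[k]$, and a cascading list of ``blocked'' partial surjections of every dimension below $n-k$. A secondary matching is then defined inductively: $\hat{0}$ is paired with the identity bijection $i\mapsto i$; for each non-identity bijection $\eta\colon[k]\to[k]$, letting $j_\eta$ be the smallest index with $\eta(j_\eta)\ne j_\eta$, we pair $\eta$ with the extension $\eta\cup\{(k+1,\eta(j_\eta))\}$; and each remaining blocked cell of dimension below $n-k$ is paired with the extension at its smallest missing index using a value chosen so that the target cell is itself blocked under the primary sweep.

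The main obstacle will be verifying acyclicity of the combined matching. The intended argument exploits monotonicity of the pivot $i^*$: within each primary matched pair the pivot is preserved, and across any non-matching Hasse edge traversed along a gradient path the pivot strictly advances, which forbids cycles within the primary matching alone; the secondary matchings introduce only a finite and well-ordered set of ``jumps'' between blocked cells, ordered by dimension and then by pivot index, which again cannot combine into a cycle. Once acyclicity is established, the remaining critical cells are the total surjections whose value-sequence resists the sweep at every index, so they have domain $[n]$ and dimension exactly $n-k$, and the discrete Morse theorem completes the proof.
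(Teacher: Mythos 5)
Your approach is genuinely different from the paper's. The paper proceeds by induction on $k$: it uses the Patchwork Lemma to split $C_{n,k}$ along which part of the partition contains $n$, matches almost everything with element matchings, and reduces the remaining piece to $C_{n-1,k-1}$. You instead try to write down a single explicit lexicographic matching plus a secondary matching on the leftover cells. The latter would be a nice, more constructive argument if it worked, but as written there are several real gaps.

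First, a minor but nonnegligible point: as stated, rule (A) applies to the empty face $\hat{0}$ (at $i=1$ it sees an empty prefix, so some value is ``missing''), but adding a single pair to $\hat{0}$ does not produce a partial surjection when $k>1$, so the resulting ``cell'' is not in $C_{n,k}$. You need to build in the requirement that the toggled cell actually lies in the complex, which you implicitly assume when you list $\hat{0}$ among the unmatched cells.

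Second, and more seriously, the secondary matching is not actually defined. You pair a ``remaining blocked cell of dimension below $n-k$'' with ``the extension at its smallest missing index using a value chosen so that the target cell is itself blocked,'' but you neither prove that such a value exists, nor that it is unique, nor that the targets of these pairings are distinct from each other and from the targets of the bijection rule. Without that, the secondary ``matching'' may fail to be a matching at all. Concretely, in $C_{4,2}$ the primary rule leaves unmatched the nine cells $\hat{0}$, $(1|2)$, $(2|1)$, $(2|13)$, $(3|12)$, $(2|14)$, $(2|134)$, $(3|124)$, $(4|123)$, and checking that your rules pair them consistently into four disjoint pairs plus one critical cell of dimension $2=n-k$ is already nontrivial; the general statement is asserted, not proved.

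Third, the acyclicity argument does not go through as stated. You claim that ``across any non-matching Hasse edge traversed along a gradient path the pivot strictly advances.'' That is not true. If the V-path step drops from $\tau_i=\sigma_i+(i^\ast,v^\ast)$ to $\sigma_{i+1}=\tau_i-j$: when $j<i^\ast$ the new pivot becomes $j$, which is \emph{strictly smaller}; when $j>i^\ast$ the pivot either stays at $i^\ast$ (if $v^\ast$ still has another preimage) or jumps to something larger. So the pivot can both go down and (weakly) up along a V-path, and it cannot serve on its own as a monotone potential. You would need a finer invariant, and you would also need to verify that the secondary pairings — which connect cells of small dimension to cells near the top — do not introduce cycles when merged with the primary matching. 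None of this is routine, and the hand-waved ``well-ordered set of jumps ordered by dimension and then by pivot index'' does not address it, especially because the unmatched cells have no pivot, by definition.

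In short, the high-level plan (a big lexicographic matching, a small cleanup matching, then discrete Morse theory) is a plausible alternative to the paper's inductive/patchwork scheme, but the key facts — that the matching is well defined, that the secondary rule pairs the blocked cells consistently, and above all that the combined matching is acyclic — are asserted rather than proved, and the pivot-monotonicity claim you lean on for acyclicity is false as stated.
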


In order to find such an acyclic matching we need two basic tools from the toolbox of discrete Morse theory. The first one (referred to as an {\em element matching} \cite[Lemma 4.1]{jonsson}) describes a particular acyclic matching which we use repeatedly. 
Let $(\sigma-x,\sigma+x)$ denote $(\sigma\setminus\{x\},\sigma\cup\{x\})$. 
\begin{lemma*}[Element matching] \label{lem:element}
Let $X$ be a finite set, $P\subseteq 2^X$ is ordered by inclusion, and for a fixed element $x\in X$, let
\begin{align}
P_x 
&= \{\sigma : \sigma-x, \sigma+x \in P\} \\
M_x 
&= \{ \{\sigma-x, \sigma+x\} : \sigma \in P_x \}.
\end{align}
Then $M_x$ is an %complete 
acyclic matching on $P$. 
%Moreover, if $M'$ is a complete acyclic matching on $P\setminus P_x$, 
%then $M_x\cup M'$ is a complete acyclic matching on $P$. 
\end{lemma*}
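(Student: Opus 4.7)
The plan is to verify two things: (i) that $M_x$ is a genuine matching on the Hasse diagram of $P$, and (ii) that it is acyclic in the sense defined above.

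For (i), I first observe that each pair $\{\sigma-x,\sigma+x\}$ consists of two subsets of $X$ differing by exactly the single element $x$. Hence the smaller is an immediate predecessor of the larger in $2^X$, and since both lie in $P$ no intermediate set can exist, so this is also an edge of the Hasse diagram of $P$. I then check that every $\tau\in P$ lies in at most one pair of $M_x$: the pair containing $\tau$ is forced to be $\{\tau,\tau+x\}$ when $x\notin\tau$ and $\{\tau-x,\tau\}$ when $x\in\tau$, so it is uniquely determined by $\tau$. Together, these two observations show that $M_x$ is a matching in the Hasse diagram of $P$.

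For (ii), I would argue by contradiction. Suppose there is a cycle
$a_{i_2}\prec b_{i_1},\;a_{i_3}\prec b_{i_2},\;\dots,\;a_{i_1}\prec b_{i_s}$
with the matched pairs $\{a_{i_j},b_{i_j}\}$ all distinct. By the construction of $M_x$ every $a_{i_j}$ avoids $x$, every $b_{i_j}$ contains $x$, and $|b_{i_j}|=|a_{i_j}|+1$. The immediate-predecessor relation $a_{i_{j+1}}\prec b_{i_j}$ then gives $a_{i_{j+1}}\subsetneq b_{i_j}=a_{i_j}\cup\{x\}$ with $|a_{i_{j+1}}|=|a_{i_j}|$. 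Since $x\notin a_{i_{j+1}}$, we get $a_{i_{j+1}}\subseteq a_{i_j}$, and equality of cardinalities forces $a_{i_{j+1}}=a_{i_j}$. But then the matched pair $\{a_{i_{j+1}},b_{i_{j+1}}\}$ coincides with $\{a_{i_j},b_{i_j}\}$, contradicting the distinctness of the pairs in the cycle.

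There is essentially no obstacle here; the argument is a short parity check on whether $x$ lies in a given set. The only subtlety is to recognize that, because $\sigma-x$ and $\sigma+x$ differ by just one element, every edge contributed by $M_x$ is automatically a covering relation in the subposet $P$, so no complication arises from restricting to $P\subseteq 2^X$. Once this is noted, both the matching property and the acyclicity go through cleanly.
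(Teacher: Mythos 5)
The paper does not actually prove this lemma; it is quoted from Jonsson's book, so there is no in-paper argument to compare against. Judged on its own terms, your proposal is the standard approach and is essentially right, but there is one subtle slip in the acyclicity step.

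You write that the cover relation $a_{i_{j+1}}\prec b_{i_j}$ in $P$ \emph{gives} $|a_{i_{j+1}}|=|a_{i_j}|$. That inference is not automatic: $P$ is an arbitrary subfamily of $2^X$, and a covering relation in the subposet $P$ can drop cardinality by more than one (e.g.\ $\emptyset\prec\{1,2\}$ is a cover in $P=\{\emptyset,\{1,2\}\}$). You explicitly note the dual subtlety (that the matched pairs are genuine covers in $P$ because they differ by a single element), but the same care is needed for the \emph{unmatched} covers appearing in the putative cycle, and there the ``differ by one element'' argument is not available a priori. Fortunately the conclusion survives with a small patch: from $a_{i_{j+1}}\subsetneq b_{i_j}=a_{i_j}\cup\{x\}$ and $x\notin a_{i_{j+1}}$ you get $a_{i_{j+1}}\subseteq a_{i_j}$, and if this containment were proper then $a_{i_j}\in P$ would lie strictly between $a_{i_{j+1}}$ and $b_{i_j}$, contradicting that $a_{i_{j+1}}$ is an \emph{immediate} predecessor of $b_{i_j}$ in $P$. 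Hence $a_{i_{j+1}}=a_{i_j}$, the two matched pairs coincide, and you reach the contradiction as intended. (Alternatively, one can simply chain $a_{i_2}\subseteq a_{i_1}$, $a_{i_3}\subseteq a_{i_2}$, \dots, $a_{i_1}\subseteq a_{i_s}$ around the cycle and conclude all the $a_{i_j}$ are equal, which does not invoke the cardinality at all.) Part (i) of your proposal is correct as written.
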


The second tool allows us to combine a collection of acyclic matchings into a single one (see \cite[Lemma 4.2]{jonsson} or  \cite[Theorem 11.10]{kozlov}). 
\begin{lemma*}[Patchwork lemma] \label{lem:cluster}
Let $P$ and $Q$ be finite posets, and let $h: P \to Q$ be an order-preserving map. Assume we have acyclic matchings $M_q$ on each of the subposets $h^{-1}(q)$. Then, $M =  \bigcup_{q\in Q} M_q$ is an acyclic matching on $P$.
\end{lemma*}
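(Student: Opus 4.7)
The plan is to verify the two defining properties of an acyclic matching separately. First, I would check that $M = \bigcup_{q \in Q} M_q$ is genuinely a matching on $P$, meaning each element of $P$ appears in at most one pair and every pair is a covering relation in $P$. Since the fibers $h^{-1}(q)$ are pairwise disjoint, no element can be matched twice across different $M_q$. Within a single fiber, $M_q$ is already a matching by hypothesis. The subtle point is that a covering relation $a \prec b$ in the subposet $h^{-1}(q)$ must remain a covering relation in the full poset $P$: if some $c \in P$ satisfied $a < c < b$, then order-preservation of $h$ would give $h(a) \leq h(c) \leq h(b)$ with $h(a) = h(b) = q$, placing $c$ in $h^{-1}(q)$ and contradicting the immediacy of $a \prec b$ within that fiber.

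For acyclicity, I would argue by contradiction. Suppose $M$ contained a cycle, so there exist pairs $\{a_{i_1},b_{i_1}\}, \dots, \{a_{i_s},b_{i_s}\} \in M$ with
\[ a_{i_2} \prec b_{i_1}, \quad a_{i_3} \prec b_{i_2}, \quad \ldots, \quad a_{i_1} \prec b_{i_s}. \]
Each pair lies in a single fiber, so $h(a_{i_j}) = h(b_{i_j})$, and applying order-preservation of $h$ to each strict inequality yields
\[ h(a_{i_1}) \leq h(a_{i_s}) \leq h(a_{i_{s-1}}) \leq \cdots \leq h(a_{i_2}) \leq h(a_{i_1}). \]
Since this chain closes on itself, all the values collapse to a single element $q \in Q$, and consequently every $a_{i_j}$ and $b_{i_j}$ lies in $h^{-1}(q)$. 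Hence all the pairs belong to $M_q$, and the same cyclic sequence of inequalities witnesses a cycle in $M_q$, contradicting its assumed acyclicity.

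The main step that requires care is the covering-relation argument in the first paragraph, since ``immediate predecessor'' is inherently relative to the ambient poset; the order-preservation of $h$ is precisely what allows this local notion in each fiber to lift to the global poset $P$. The rest is a direct bookkeeping exercise using disjointness of fibers together with the one-sided comparison $h(x) \leq h(y)$ whenever $x \leq y$.
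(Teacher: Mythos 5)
Your proof is correct. Note that the paper does not prove this lemma at all---it is quoted as a known tool with a citation to Jonsson (Lemma 4.2) and Kozlov (Theorem 11.10)---and your argument is essentially the standard one found there: order-preservation of $h$ forces any alternating cycle to have constant $h$-value, hence to live in a single fiber, contradicting acyclicity of that $M_q$. You also handle correctly the point that is most often glossed over, namely that a covering relation inside a fiber $h^{-1}(q)$ remains a covering relation in $P$ (if $a<c<b$ in $P$ then $h(a)\leq h(c)\leq h(b)$ with $h(a)=h(b)=q$ forces $c\in h^{-1}(q)$), so the union $M$ really is a matching in the Hasse diagram of $P$; together with the disjointness of fibers and the closing chain of inequalities $h(a_{i_1})\leq h(a_{i_s})\leq\cdots\leq h(a_{i_1})$, this gives a complete and self-contained proof.
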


\begin{proof}[Proof of Lemma  \ref{lem:acyclic}]
We proceed by induction on $k$. For $k=1$, observe that $C_{n,1}$ is isomorphic to $2^{[n]}$, and so the element matching 
\[ \big\{ \{\sigma-n, \sigma+n\} : \sigma \in 2^{[n]} \big\} \]
is a complete acyclic matching (i.e. there are no critical cells).

Now, assume $k>1$ and that the lemma holds for $C_{n',k-1}$ for all $n' > k-1$. We denote the cells of $C_{n,k}$ as $k$-tuples
\[X = (X_1, X_2, \dots, X_k),\]
where the $X_i$ are either nonempty, pairwise disjoint subsets of $[n]$ (corresponding to a nonempty cell of $C_{n,k}$), or they satisfy $X_1 = X_2 = \cdots = X_k = \emptyset$ (corresponding to the unique empty cell).
%, and write $\underline{X}$ to denote the {\em underlying set} \[\underline{X} = X_1 \cup X_2 \cup \cdots \cup X_k.\]
Note that the dimension of a nonempty cell equals $\sum_{i=1}^k(|X_i|-1)$, and 
%$|\underline{X}| -k$, and 
so our goal is to find an acyclic matching in $C_{n,k}$ whose critical cells (if there are any) satisfy $X_1 \cup X_2 \cup \cdots \cup X_k = [n]$. 

\medskip

Let  
$\{a_1, a_2, \dots, a_{k-1}\}$ be an antichain and let
$a_k \prec a_i$ for all $1\leq i<k$.
Then a map $h_1 : C_{n,k} \to \{a_1, a_2, \dots, a_k\}$, defined by
\[h_1(X_1, X_2, \dots, X_k) = 
\begin{cases}
a_i & \text{ if } n\in X_i \text{ and } i < k \\
a_k & \text{ otherwise } 
\end{cases}
\]
%Note that the empty cell is mapped to $a_k$. 
is order-preserving. Our goal is therefore to apply the patchwork lemma by finding appropriate acyclic matchings on each of the subposets $A_i := h_1^{-1}(a_i)$. 

\medskip

Fix $1\leq i < k$ and consider the projection map $p_i: A_i \to (2^{[n-1]})^{\times (k-1)}$ which forgets the $i$th component
\[p_i(X_1, X_2, \dots, X_k) = (X_1, \dots, X_{i-1}, X_{i+1}, \dots, X_k).\]
This is an order preserving map when the range of $p_i$ is ordered by (componentwise) inclusion. For a given $(X_1, \dots, X_{i-1}, X_{i+1}, \dots, X_k)$ in the range of $p_i$, we observe that $p_i^{-1}(X_1, \dots, X_{i-1}, X_{i+1}, \dots, X_k)$ is isomorphic to $2^{Y_i}$, where \[Y_i := [n-1]\setminus (X_1\cup \cdots \cup X_{i-1}\cup X_{i+1}\cup \cdots \cup X_k).\] Therefore, if $Y_i\neq \emptyset$, then $p_i^{-1}(X_1, \dots, X_{i-1}, X_{i+1}, \dots, X_k)$ has a complete element matching. Otherwise, we have
\[p_i^{-1}(X_1, \dots, X_{i-1}, X_{i+1}, \dots, X_k) = (X_1, \dots, X_{i-1}, \{n\}, X_{i+1}, \dots, X_k),\] which will be one of our critical cells of dimension $n-k$. 

\medskip

It remains to find an appropriate acyclic matching on $A_k$. Define an order-preserving map 
$h_2 : A_k \to \{a \prec b\}$ where 
\[h_2(X_1, X_2, \dots, X_k) = 
\begin{cases}
a & \text{ if } X_k = \{n\} \text{ or } X_k = \emptyset,  \\
b & \text{ otherwise. }
\end{cases}\]
Set $A := h_2^{-1}(a)$ and $B: = h_2^{-1}(b)$. Note that the empty cell belongs to $A$, and $A$ is isomorphic to $C_{n-1, k-1}$. By induction, there is an acyclic matching on $A$ such that all critical cells (if there are any) have dimension $n-k$. 

%A similar argument as above shows that the set $B$ has a complete acyclic matching. 
Consider the projection map $p_k: B \to (2^{[n-1]})^{\times (k-1)}$ which forgets the $k$th component
\[p_i(X_1, X_2, \dots, X_k) = (X_1, X_2, \dots,  X_{k-1}).\]
This is an order preserving map when the range of $p_k$ is ordered by (componentwise) inclusion. For a given $(X_1, X_2, \dots, X_{k-1})$ in the range of $p_k$, we observe that $p_i^{-1}(X_1, X_2, \dots, X_{k-1})$ is isomorphic to $2^{Y_k}\setminus \big\{\emptyset, \{n\} \big\}$, where \[Y_k := [n]\setminus (X_1\cup X_2 \cup \cdots \cup X_{k-1}).\] By definition, $Y_k \neq \emptyset$, and so $p_i^{-1}(X_1, X_2, \dots, X_{k-1})$ has a complete element matching of the form $\{\sigma -n, \sigma +n\}$. 
\end{proof}

%\section{Acknowledgements}
%\noindent M.G.D. was supported by KAIST Advanced Institute for Science-X (KAI-X). A.F.H and D.L. were both supported by the Institute for Basic Science (IBS-R029-C1).

\bibliographystyle{plain}

\end{document}